\numberwithin{equation}{section}
\newcommand{\Z}{\ensuremath{\mathbb{Z}}}
\newtheorem{theorem}{Theorem}[section]
\newtheorem{lem}[theorem]{Lemma}
\newtheorem{cor}[theorem]{Corollary}
\def\today{{\number\day\space
 \ifcase\month\or
  January\or February\or March\or April\or May\or June\or
  July\or August\or September\or October\or November\or December\fi
 \space\number\year}}
\begin{document}

\title[Scaling of Congestion in Small World Networks]{Scaling of Congestion in Small World Networks}

\author{Iraj Saniee and Gabriel H. Tucci} 
\thanks{I. Saniee and G. H. Tucci are with Bell Laboratories, Alcatel-Lucent, 600 Mountain Avenue, Murray Hill, New Jersey 07974, USA. 
E-mail: iis@research.bell-labs.com, gabriel.tucci@alcatel-lucent.com}

\begin{abstract}
In this report we show that in a planar exponentially growing network
consisting of $N$ nodes, 
congestion scales as $O(N^2/\log(N))$ independently of how flows may be routed. 
This is in contrast to the $O(N^{3/2})$ scaling of congestion in a flat polynomially growing network. 
We also show that without the planarity condition,
congestion in a small world network could scale as low as $O(N^{1+\epsilon})$, for arbitrarily small $\epsilon$. 
These extreme results demonstrate that the small world property by itself cannot provide
guidance on the level of congestion in a network and
other characteristics are needed for better resolution. Finally, we investigate 
scaling of congestion under the geodesic flow, that is,
when flows are routed on shortest paths based on a link metric.
Here we prove that if the link weights 
are scaled by arbitrarily small or large multipliers
then considerable changes in congestion may occur. However, if we 
constrain the link-weight multipliers to be bounded away from both
zero and infinity, then variations in congestion due to such remetrization
are negligible.

\end{abstract}

\maketitle

\section{Introduction}

\par The study of large-scale (complex) networks, such as computer, biological and social networks, is a multidisciplinary field that combines ideas from mathematics, physics, biology, social sciences and other fields. A remarkable and widely discussed phenomena associated with such networks is the {\it small world} property.   It is observed in many such 
networks, man-made or natural, that the typical distance between the nodes is surprisingly small. 
More formally, as a function of
the number of nodes, $N$, the average distance between a node pair 
typically scales at or below $O(\log(N))$.  


\par In this work, we study the load characteristics of small world networks.  Assuming one unit of 
demand between each node pair, we quantify as a function of $N$, how the maximal nodal load scales, independently of how 
each unit of demand may be routed. In other words, we are interested in the smallest of
such maximal nodal loads as a function of routing, which we refer to as {\it congestion}, that the network could 
experience.  
We show that in the planar small-world network congestion is almost quadratic in $N$, which is as high as
it can get, specifically $O(N^2/\log(N))$.  
In contrast, for some non-planar small-world networks, congestion may be 
almost linear in $N$, namely $O(N^{1+\epsilon})$ for arbitrarily small $\epsilon$.  
Since congestion in a network with $N$ nodes cannot have scaling order less than $O(N)$ or more than $O(N^2)$,
we conclude that the small world property alone is not sufficient to predict the level of congestion 
{\it a priori} and additional
characteristics may be needed to explain congestion features of complex networks. This
has been argued in 
\cite{NS2, NS1, Tucci1, Tucci2, H0, H3, H4, jonck} for the case of intrinsic hyperbolicity, which is 
a geometric feature above and beyond the small world property.

\par Additionally, we investigate what happens to congestion when we change the link metric that
prescribes routing. That is, for a network with edge weight $\{d_e\}_{e\in E}$ we change the metric 
by a factor $0\leq w_e \leq \infty$
thus assigning each edge $e$ a new weight $w_ed_e$. We explore the extent to which this change in the metric 
can change congestion in the network. We prove that if we allow the weights to get arbitrarily small or large, 
i.e. when for some edges, $w_e$ approach zero and for some others $w_e$ approach infinity, 
then considerable changes in congestion can 
occur. On the other hand, if we require the weights to be bounded away from zero 
and infinity, i.e. when $0 < k \leq w_e \leq K < \infty$ for all edges $e$, 
then congestion cannot change significantly.  These observations
quantify the degree to which remetrization in a small world network may be helpful in affecting congestion.

\section{Traffic on Small World Planar Graphs}\label{sec2}

As mentioned in the introduction, the small world property is ubiquitous in complex networks. 
Formally we say that a graph has the small world property if its diameter $D$ is of the order 
$\log(N)$ where $N$ is the number of nodes in the graph. It has been shown that a
surprising number of real-life, man-made or natural,
networks have the small world property, see, for example, \cite{Newman1, Newman2}. 
To be more specific, assume 
$G$ is an infinite planar graph and let $x_0$ be an arbitrary fixed node in the graph that 
we shall designate as the root. Let us assign a weight $1$ to each edge, thus $d_e =1$ for all edges $e$, and let $G_n=B(x_0,n)$ be the ball of center $x_0$ and radius $n$. In other words a node 
$y$ belongs to $G_n$ if and only of $d(x_0,y)\leq n$. 
More generally, we can consider weighted graphs where each edge has a non-negative length. 
We will further assume that the sub-graphs $G_{n}$ have exponential growth, which is clearly 
equivalent to the small world property, as defined above. More precisely, there exist $n_0$ and $\lambda>1$ 
such that
$$
|G_{n}|\geq \lambda^{n}
$$
for all $n\geq n_0$.

\par Assume that for every $n$ and every pair of nodes in $G_{n}$ 
there is a unit of demand between each node pair. Therefore, the total demand in $G_{n}$ is $T_{n}(G_{n})=N(N-1)/2$ 
where $N=N(n)=|G(n)|$. Given a node $v$ in $G_{n}$ we denote by $T_{n}(v)$ the total flow 
in $G_{n}$ routed through $v$. The load, or congestion, $T_{n}^{\max} (R)$, is the maximum of $T_n(v)$ over all vertices
$v$ in $G$, which is typically a function of the routing $R$.

The next theorem shows that for a planar graph with exponential growth, there exist 
nodes with load $O(N^2/\log(N))$ for $n$ sufficiently large regardless of routing. 

\begin{figure}[!Ht]\label{Fig1}
  \begin{center}
    \includegraphics[width=2.3in]{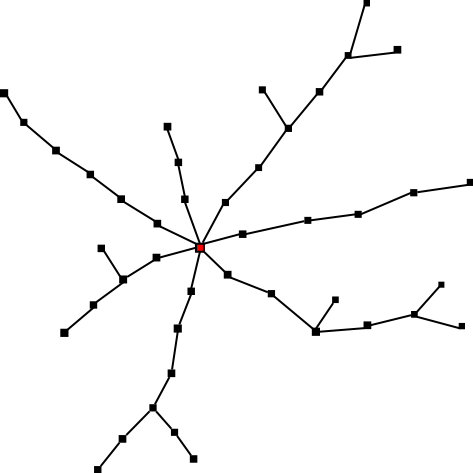}\hspace{2cm}
    \includegraphics[width=1.8in]{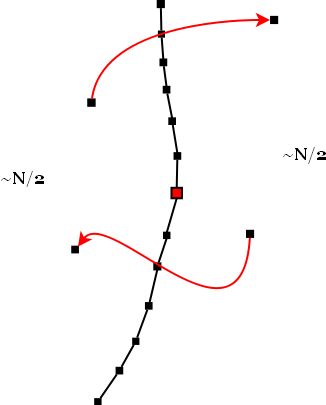}
	\caption{Left: A simplified figure of a minimal spanning tree with root $x_0$. Right: Flows crossing the boundary 
	separating two planar wedges each with $N/2 \pm c *\log(N)$ nodes. Note that (red) geodesic paths may cross 
	the boundary more than once.}
  \end{center}
\end{figure}

\begin{theorem}
Let $G$ be an infinite planar graph with exponential growth.
Assume one unit of demand between every pair of nodes in $G_{n}$.
Then for every $n$ there exists a node $v\in G_{n}$ such that 
$$
T_{n}(v)\geq \frac{N^2}{16n}-\frac{N}{8}
$$
where $N=|G_n|$.
\end{theorem}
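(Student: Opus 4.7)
The plan is to exploit planarity to construct a small radial separator of $G_n$ whose removal splits the ball into two roughly balanced parts, and then apply a pigeonhole bound to the demands forced to cross it.

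First, fix a breadth-first spanning tree $T$ of $G_n$ rooted at $x_0$. Since every vertex of $G_n$ lies within distance $n$ of $x_0$, $T$ has depth at most $n$. The planar embedding of $G$ induces a cyclic order on the children of $x_0$ and, recursively, on the leaves of $T$. In the spirit of the two-wedge construction suggested by Figure~1 (or equivalently the classical fundamental-cycle argument: each non-tree edge closes a cycle of length $\le 2n+1$ in $T$ which, by the Jordan curve theorem, separates the plane), I would produce a vertex set $S\subseteq V(G_n)$ such that removing $S$ disconnects $G_n\setminus S$ into two sets $A$ and $B$ with
\[
|S|\le 2n+1\qquad\text{and}\qquad |A|,\,|B|\;\ge\;\frac{N-|S|}{2}-O(n),
\]
so that in particular $|A|\cdot|B|\ge N^2/4 - O(nN)$.

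Second, because $S$ is a vertex cut separating $A$ from $B$, every unit of the $|A|\cdot|B|$ demands between the two parts must traverse at least one node of $S$ regardless of the chosen routing $R$; the remark in Figure~1 that geodesic paths may cross $S$ more than once only strengthens this. Hence $\sum_{v\in S} T_n(v)\ge |A|\cdot|B|$, and pigeonhole yields some $v\in S$ with
\[
T_n(v)\;\ge\;\frac{|A|\cdot|B|}{|S|}\;\ge\;\frac{N^2/4 - O(nN)}{2n+1}\;\ge\;\frac{N^2}{16n}-\frac{N}{8},
\]
after elementary simplification (absorbing lower-order terms into the additive $-N/8$).

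The main obstacle is the first step: producing a planar separator of size only $O(n)$ --- much smaller than the classical $O(\sqrt{N})$ Lipton--Tarjan bound --- while simultaneously keeping the bisection balanced to within $O(n)$ vertices, starting from a BFS tree of depth $n$. This is where the low-depth structure of $T$ and the planar embedding together give the crucial $1/n$ factor in the denominator which, combined with the small-world bound $n=O(\log N)$ coming from exponential growth $|G_n|\ge \lambda^n$, ultimately produces the $N^2/\log N$ scaling advertised in the abstract.
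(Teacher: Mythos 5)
Your overall architecture --- find a vertex cut $S$ of size $O(n)$ that splits $G_n$ into two nearly balanced parts $A$ and $B$, observe that all $|A|\cdot|B|$ units of cross-demand must load some node of $S$ under any routing, and pigeonhole --- is exactly the architecture of the paper's proof. But the step you explicitly defer (``I would produce a vertex set $S$\dots'' and later ``the main obstacle is the first step'') is the entire mathematical content of the theorem; everything after it is bookkeeping. As written, the proposal identifies the right target but does not prove it, so there is a genuine gap: you never actually construct the balanced $O(n)$-separator, and the two devices you gesture at do not obviously deliver it. A single fundamental cycle of a BFS tree gives only a $1/3$--$2/3$ balance guarantee (and requires care about $2$-connectivity or triangulating faces), not balance to within $O(n)$ of an exact bisection; and ``the two-wedge construction suggested by Figure~1'' is precisely the construction you need to supply, not cite.

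The paper fills this gap with a sweep argument. Take the tree $\mathcal{P}_n$ of geodesic paths from the root $x_0$ to every node of $G_n$, and use the planar embedding to enumerate these rays in clockwise order as $\gamma_1,\dots,\gamma_M$. Set $W_i=\gamma_1\cup\dots\cup\gamma_i$. Each ray has length at most $n$, so $|W_{i+1}|-|W_i|\le n$, and since $|W_M|=N$ there is a first index $i_0$ with $|W_{i_0}|\ge N/2$; the increment bound then forces $N/2\le |W_{i_0}|\le N/2+n$, which is the $N/2\pm O(n)$ balance you wanted. The separator is the pair of bounding rays $\gamma_1\cup\gamma_{i_0}$, of size at most $2n$, and planarity (Jordan curve) forces every path between $W_{i_0}$ and its complement to meet it. This discrete intermediate-value step on the wedge sizes is the ``crucial $1/n$'' you flagged as missing. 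One further caution on your arithmetic: the inequality $\frac{N^2/4-O(nN)}{2n+1}\ge \frac{N^2}{16n}-\frac{N}{8}$ is not an ``elementary simplification'' until the implied constants are pinned down; the paper gets the exact constants by bounding $|W_{i_0}|\cdot|G_n\setminus W_{i_0}|/2 \ge \frac{N}{4}\bigl(\frac{N}{2}-n\bigr)$ and dividing by $2n$, which yields $\frac{N^2}{16n}-\frac{N}{8}$ on the nose.
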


\begin{proof}
Fix $n$ and let $\mathcal{P}_{n}$ be the spanning tree of all geodesic (shortest) paths with the node $x_0$ as the origin, as shown in
Figure~1, left. 
Observe that since $G$ is small world, all node pairs have distance $O(\log(N))$ and thus each ray from $x_0$ in $\mathcal{P}_{n}$ has length thus bounded.

Enumerate all the paths in $\mathcal{P}_{n}$ in clockwise order, possible because of the planarity of $G_n$. Therefore, 
$$
\mathcal{P}_{n}=\{\gamma_1,\gamma_2,\ldots, \gamma_M\}
$$
and by the small world property each ray $|\gamma_i | \leq O(log(N))$.  Let $W_1=\gamma_1$, $W_2=\gamma_1\cup\gamma_2$ and in general $W_i=\gamma_1\cup \ldots \cup\gamma_i$ for $i\in\{1,2,\ldots, M\}$. It is clear that for all $i$
\begin{equation}
\label{eqq1}
|W_{i+1}|-|W_{i}|\leq |\gamma_{i+1}|\leq n.
\end{equation}

We also know that
\begin{equation}
\label{eqq2}
|W_1|\leq |W_2|\leq \ldots \leq |W_{M}|=|G_{n}|=N
\end{equation}

There exists $i_0$ such that $|W_{i_0}|\geq N/2$ and $|W_{i_0-1}|< N/2$ because addition of
each $\gamma_i$ adds at most $O(\log(N))$ nodes to $W_i$, and moreover by inequality 
(\ref{eqq1}) we know that
$$
\frac{N}{2}\leq |W_{i_{0}}|\leq \frac{N}{2}+n.
$$
Let us consider the set $G_{n}\setminus W_{i_0}$. It is clear that all the paths between $W_{i_0}$ and $G_{n}\setminus W_{i_0}$ have to intersect the path $\gamma_1\cup \gamma_{i_0}$. The traffic between $W_{i_0}$ and $G_{n}\setminus W_{i_0}$ is equal to $|G_{n}\setminus W_{i_0}||W_{i_0}|/2$. Since 
$$
\frac{N}{2}-n\leq |G_{n}\setminus W_{i_{0}}|\leq \frac{N}{2}
$$
which satisfies
\begin{equation}
\frac{N}{4}\Bigg(\frac{N}{2}-n\Bigg) \leq \frac{|G_{n}\setminus W_{i_0}|\cdot|W_{i_0}|}{2}\leq \frac{N}{4}\Bigg(\frac{N}{2}+n\Bigg).
\end{equation}
Since this traffic has to pass through $\gamma_1\cup \gamma_{i_0}$ at some point then there exists at least one node $v$ in $\gamma_1\cup \gamma_{i_0}$ with load at least 
$$
T_{n}(v)\geq \frac{N}{4}\Big(\frac{N}{2}-n\Big)\frac{1}{2n}=\frac{N^2}{16n}-\frac{N}{8}
$$
(see Figure~1, right) proving our claim.
\end{proof}
Our first claim follows a corollary of the above theorem.
\begin{cor}
Let $G$ be an infinite planar graph with exponential growth. Then for $n$ sufficiently large there exists a node $v\in G_{n}$ such that
$$
T_{n}(v)\geq C\frac{N^2}{\log(N)}
$$
where $C$ is a constant independent on $n$.
\end{cor}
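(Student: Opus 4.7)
The plan is to derive the corollary as a direct consequence of the preceding theorem by exploiting the exponential growth hypothesis to bound the radius $n$ in terms of $\log(N)$. Concretely, the theorem gives a node $v \in G_n$ with
\[
T_n(v) \geq \frac{N^2}{16n} - \frac{N}{8},
\]
which is useful only if we can show $n = O(\log N)$; otherwise the leading term is not of order $N^2/\log(N)$. This is exactly what exponential growth provides.

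First, I would invoke the standing assumption from the introduction that there exist $n_0$ and $\lambda > 1$ with $|G_n| \geq \lambda^n$ for all $n \geq n_0$. Taking logarithms in the inequality $N = |G_n| \geq \lambda^n$ yields
\[
n \leq \frac{\log N}{\log \lambda},
\]
valid for all $n \geq n_0$. Thus, substituting into the theorem's lower bound,
\[
T_n(v) \;\geq\; \frac{N^2}{16 n} - \frac{N}{8} \;\geq\; \frac{\log \lambda}{16}\cdot \frac{N^2}{\log N} - \frac{N}{8}.
\]

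Second, I would observe that the subtracted $N/8$ term is negligible compared to $N^2/\log(N)$ once $N$ is large, since $N/\log(N) \to \infty$. So for any constant $C$ strictly smaller than $\log(\lambda)/16$, one can choose $n$ (equivalently $N$) large enough that
\[
\frac{\log \lambda}{16}\cdot \frac{N^2}{\log N} - \frac{N}{8} \;\geq\; C\,\frac{N^2}{\log N}.
\]
Taking for instance $C = \log(\lambda)/32$ and $n$ large enough to absorb the linear error term gives the desired inequality, with $C$ depending only on the growth constant $\lambda$ and therefore independent of $n$.

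There is no real obstacle here; the only point requiring a moment of care is verifying that the theorem's second term $N/8$ is of smaller order than the first. Since $N/(N^2/\log N) = \log(N)/N \to 0$, this is automatic for $n$ sufficiently large. The corollary is thus a straightforward packaging of the theorem together with the elementary fact that exponential growth forces $n \leq \log(N)/\log(\lambda)$.
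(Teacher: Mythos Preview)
Your argument is correct and is exactly the intended derivation: the paper states the corollary without an explicit proof, treating it as an immediate consequence of the theorem together with the exponential-growth bound $n \leq \log(N)/\log(\lambda)$, which is precisely what you spell out. There is nothing to add.
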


\section{Load on a General Small World Graph}\label{sec3}

It turns out that the planarity property is essential for the existence of highly congested nodes
proven above. 
We now show that in contrast, when $G$ is not planar, congestion can be made
to approach $O(N)$.  More explicitly, given $\epsilon>0$ 
there exists infinite graphs with exponential growth with uniformly bounded degree
such that for every node $v\in G_{n}$ and $n$ sufficiently large, $T_{n}(v)\leq N^{1+\epsilon}$.

\par Before providing such a construction let us show the following Lemma.

\begin{lem}
Let $G$ be an infinite graph and let $G_{n}$ be the ball of radius $n$ centered at $x_{0}$ as before. Assume moreover that $\sup_{v\in G} \mathrm{deg}(v)\leq \Delta<\infty$. Then for every $v\in G_{n}$ the following holds
\begin{equation}
T_{n}(v)\leq \Delta^2(\Delta-1)^{D-2}\cdot D^2
\end{equation}
where $D=\mathrm{diam}(G_{n})$.
\end{lem}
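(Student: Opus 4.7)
The plan is to prove the bound by a direct counting argument, interpreting $T_n(v)$ as the load at $v$ when every demand is routed along a geodesic in $G_n$. Since the congestion-optimal routing (which is what the paper is ultimately interested in) can only improve on the worst-vertex load, a pointwise bound for geodesic routing will transfer to it. The strategy is essentially sphere-by-sphere accounting around $v$, combined with the observation that a geodesic through $v$ has length at most $D=\mathrm{diam}(G_n)$.

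First, suppose the geodesic from $a$ to $b$ traverses $v$; then $d(a,v)+d(v,b)=d(a,b)\leq D$, so writing $k=d(a,v)$ and $l=d(v,b)$, every contributing ordered pair $(a,b)$ is indexed by $(k,l)$ with $k,l\geq 0$ and $k+l\leq D$. Second, I would use the degree hypothesis to control the spheres around $v$: a standard no-backtracking walk count shows that for $k\geq 1$ the number of vertices at distance exactly $k$ from $v$ is at most $\Delta(\Delta-1)^{k-1}$ (the first edge offers $\Delta$ choices, each subsequent edge at most $\Delta-1$). Hence for $k,l\geq 1$ the number of ordered pairs with $d(a,v)=k$, $d(v,b)=l$ is at most $\Delta^2(\Delta-1)^{k+l-2}\leq \Delta^2(\Delta-1)^{D-2}$. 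Summing over the at most $D^2$ admissible pairs $(k,l)$ yields the stated bound $T_n(v)\leq \Delta^2(\Delta-1)^{D-2}D^2$; the degenerate boundary cases $k=0$ or $l=0$ (i.e.\ $a=v$ or $b=v$) contribute only lower-order terms that are easily absorbed.

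The main conceptual point to pin down is the implicit routing convention: under a completely arbitrary routing one could push all $\binom{N}{2}$ demands through a single vertex, so no such bound could hold in full generality, and the natural reading is therefore geodesic routing. Once that is accepted, the hard part is essentially bookkeeping: the argument reduces to an exponential volume bound together with the crude $D\times D$ enumeration of $(k,l)$ pairs. The $D^2$ factor in the statement is in fact a bit wasteful and could be sharpened to $O(D)$ by carefully summing the geometric series, but the version as written is clearly sufficient for the non-planar construction that follows, where only the exponential dependence on $D$ really matters.
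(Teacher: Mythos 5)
Your argument is correct and is essentially the paper's own proof: the same decomposition of $G_n$ into spheres $S_k$ around $v$, the same bound $|S_k|\leq \Delta(\Delta-1)^{k-1}$, the same observation that a geodesic through $v$ forces $k+l\leq D$, and the same crude $D^2$ enumeration of admissible pairs. Your added remarks on the geodesic-routing convention and the degenerate cases $k=0$ or $l=0$ are sensible clarifications but do not change the substance.
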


\begin{proof}
Let $v\in G_n$ and define $S_{k}:=\{x\in G_n: d(v,x)=k\}$. Then it is clear that $G_{n}=\{v\}\cup\bigcup_{p=1}^{D}S_{p}$ and moreover $|S_{k}|\leq \Delta(\Delta-1)^{k-1}$. Therefore,
$$
T_{n}(v)\leq \sum_{k+l\leq D}{|S_{k}|\cdot|S_{l}|}
$$
where the inequality is coming from the fact that if $k+l>D$ then the geodesic path between a node in $S_{k}$ and a node in $S_{l}$ does not pass through $v$. Hence,
$$
T_{n}(v)\leq \sum_{k+l\leq D}{\Delta^2(\Delta-1)^{k+l-2}}\leq \Delta^2(\Delta-1)^{D-2}D^2.
$$
\end{proof}
\par We state the following result due to Bollobas for completeness.

\begin{theorem}[\cite{BVega}]\label{Boll}
Given $r\geq 3$ for $N$ sufficiently large a random $r$--regular graph $X$ with $N$ nodes has diameter at most  
$$
\mathrm{diam}(X)\leq \log_{r-1}(N) + \log_{r-1}(\log_{r-1}(N))+C
$$
where $C$ is a fixed constant depending on $r$ and independent on $N$.
\end{theorem}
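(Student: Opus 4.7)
The plan is to work in the \emph{configuration model}: assign each of the $N$ vertices $r$ labelled half-edges and pair the resulting $rN$ half-edges by a uniformly random perfect matching. Since the resulting multigraph is simple with probability bounded below by a positive constant depending only on $r$ (a standard fact for fixed $r$), any property that holds with probability $1-o(N^{-2})$ in the configuration model also holds with high probability in the uniform random $r$-regular graph.

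I would then analyse a breadth-first exploration from a fixed root $v$, revealing pairings only as needed. Let $L_k$ denote the vertices at depth exactly $k$ in the BFS tree, and $B_k=\bigcup_{j\leq k}L_j$. At each step, each unmatched half-edge in $L_k$ is paired with a uniformly random half-edge among those still unexposed. As long as $|B_k|\leq N^{1/2}\log N$, the probability that a revealed pairing lands in a half-edge of $B_k$ is $O(|B_k|/N)$, and so $|L_{k+1}|\geq (r-1)|L_k|-X_k$, where $X_k$ is a random collision count with $\E X_k=O(|L_k|\cdot |B_k|/N)$. A Chernoff/martingale bound then yields, with probability $1-o(N^{-3})$, a lower bound of the form
\begin{equation*}
|L_k|\geq (r-1)^k\bigl(1-o(1)\bigr)
\end{equation*}
valid for all $k$ with $(r-1)^k\leq N^{1/2}\log N$. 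Setting $k_1:=\tfrac{1}{2}\bigl(\log_{r-1}N+\log_{r-1}\log_{r-1}N\bigr)+O(1)$ gives $|B_{k_1}|\geq N^{1/2}\log N$.

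Next I would run the analogous BFS from a second vertex $u$, reusing the already-exposed pairings, to obtain $|B(u,k_1)|\geq N^{1/2}\log N$ with the same probability guarantee. For the key collision step, note that the BFS from $u$ has $\Theta(r N^{1/2}\log N)$ unmatched half-edges on its frontier while the number of unmatched half-edges incident to $B(v,k_1)$ is also $\Theta(r N^{1/2}\log N)$ out of a pool of at most $rN$. The probability that no frontier half-edge of $u$ ever pairs into $B(v,k_1)$ before the two BFS boundaries meet is of order $\exp(-\Omega(\log^2 N))=o(N^{-3})$. Hence $B(v,k_1)\cap B(u,k_1)\neq\emptyset$ with probability $1-o(N^{-3})$, which forces
\begin{equation*}
d(u,v)\leq 2k_1\leq \log_{r-1}N+\log_{r-1}\log_{r-1}N+C.
\end{equation*}
A union bound over the $\binom{N}{2}$ pairs $(u,v)$ completes the argument.

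The main technical obstacle will be Step~2: controlling the BFS growth rigorously through $k_1$ levels, since the conditional distribution of newly revealed neighbours is not independent and the deficit $X_k$ must be shown to accumulate only as a $o(1)$ multiplicative error down to the threshold $|L_k|\sim N^{1/2}\log N$. Using a Doob martingale on the sequence of exposed pairings, together with the bounded-difference inequality, is the cleanest route. The collision step of Step~3 is analogous to the classical birthday calculation and is comparatively routine once the two balls are shown to be large; the constant $C$ absorbs the finitely many $O(1)$ slack terms introduced in the exponent.
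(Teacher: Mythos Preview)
The paper does not prove this theorem. It is quoted verbatim from Bollob\'as and Fernandez de la Vega \cite{BVega} and introduced with the sentence ``We state the following result due to Bollobas for completeness''; no argument is given. There is therefore nothing in the paper to compare your proposal against.

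For what it is worth, your outline is the standard route to this result and is close in spirit to the original proof in \cite{BVega}: work in the configuration model, grow a BFS tree from each of two vertices until each ball reaches size of order $N^{1/2}\log N$, use a birthday-type collision estimate to show the two balls meet with failure probability $o(N^{-2})$, and finish with a union bound over all pairs. The main technical points you flag---controlling the cumulative deficit in $|L_k|$ via a bounded-difference martingale, and handling the dependence introduced when the second BFS reuses pairings already exposed by the first---are exactly the places where care is needed, and both are manageable. One small remark: for very small $k$ the level sizes $|L_k|$ are too small for a Chernoff bound to give $1-o(N^{-3})$ failure probability directly; the usual fix is to observe that in those early levels the probability of \emph{any} collision is $O((r-1)^{2k}/N)$, which is negligible, so growth is essentially deterministic until $|L_k|$ is polylogarithmic and concentration takes over.
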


Now we are ready to show the construction of a small world graph with small congestion. Let $\epsilon>0$ and consider $G$ a infinite $k$-regular tree where the value of $k$ will be chosen later. Denote by $x_0$ the root of $G$ and $G_{n}=B(x_0,n)$ as before. Let $\widehat{G}_{n}$ be the graph constructed by connecting all the nodes in the spheres $S_{p}$
$$
S_{p}:=\{x\in G\,\,:\,\,d(x_0,x)=p\}
$$ 
by a $k$-regular random graph for every $1\leq p\leq n$. Then $|S_{p}|=k(k-1)^{p-1}$ and it is clear that using Theorem \ref{Boll} we have that
$$
\mathrm{diam}(\widehat{G}|S_{p})\leq p + \log_{k-1}(p) + C_{k}.
$$
Note that $|\widehat{G}_n|=|G_{n}|$ since we are not adding new nodes. It is not difficult to see that 
\begin{equation}\label{diam}
\mathrm{diam}(\widehat{G}_n)\leq \max_{t\leq s\leq n} \Big\{\mathrm{diam}(\widehat{G}|S_{t})+s-t\Big\} \leq n+\log_{k-1}(n)+C_{k}.
\end{equation}
Therefore, using the previous Lemma we see that for every node $v\in \widehat{G}_n$  
\begin{equation}
T_{n}(v)\leq (n+\log_{k-1}(n)+C_{k})^2\cdot (2k-1)^{n+\log_{k-1}(n)+C_{k}}.
\end{equation}
Therefore,
\begin{equation}
\frac{\log T_{n}(v)}{\log |G_{n}|}\leq \frac{(n+\log_{k-1}(n)+C_{k})\log(2k-1) + 2\log(n+\log_{k-1}(n)+C_{k})}{n\log(k-1)} 
\end{equation}
and hence 
\begin{equation}
\lim_{n\to\infty} \frac{\log T_{n}(v)}{\log |G_{n}|}=\frac{\log(2k-1)}{\log(k-1)}.
\end{equation}
By taking $k$ sufficiently large depending on $\epsilon$ we see that $T_{n}(v)\leq N^{1+\epsilon}$ for $n$ sufficiently large.

\section{The Existence of a Core, Hyperbolicity and Remetrization}

In \cite{NS2, NS1, Tucci1, Tucci2, H0, H3, H4, jonck}, it has been shown that $\delta$-hyperbolicity implies the existence of a core, that is, a non-empty set of nodes whose load scale as $O(N^2)$ under geodesic routing. In Section \ref{sec2}, we proved that planar graphs with exponential growth cannot avoid congestion of order $O(N^2/\log(N))$ no matter how the routing is performed. On the other hand, we observed in Section \ref{sec3} that exponential growth alone is not sufficient to guarantee the existence of such highly congested nodes. Thus, unlike the small world property, 
$\delta$-hyperbolicity is a sufficient
condition for a network to have highly congested nodes.  The reverse need not be true,
however. It is not difficult to construct non-hyperbolic graphs in which load scales as $O(N^2)$. 
For instance,
two square grids in two vertical planes separated by a single horizontal link joining their origins.
It is even possible to construct small world graphs with $O(N^2)$ load which are not hyperbolic.
Let $T_3$ be the 3-regular infinite tree and let $G=T_{3}\times \Z$. The graph $G$ is not Gromov
hyperbolic since it has $\Z^{2}$ as a sub-graph.  Yet, $(x_{0},0)$ has traffic of order $O(N^2)$ 
where $x_{0}$ is the root of $T_{3}$. 
It is interesting to note that even tough the graph $G$ is not hyperbolic, it has $T_{3}$ as a 
sub-graph. Examples of small world graphs with load of order $O(N^2)$ appear to include 
hyperbolic sub-graphs. We do not know if this is always true but it seems likely since exponential
growth implies existence of an exponentially growing tree sub-graph (e.g., its spanning tree).

\par We next explore what happens to hyperbolicity when we apply remetrization. More specifically, assume a metric 
graph $G$ where each edge $e$ has an associated non-negative distance $d_e$ that satisfies the triangle inequality. 
We modify each edge distance by a factor $0<w_e<\infty$ so that
the new length of the edge $e$ is $w_ed_e$. We also require that the coefficients $\{w_e\}_{e\in E}$ 
are chosen in such a way that the new edge distances continue to satisfy the triangle inequality and 
thus constitute a metric. 

\par To determine if $O(N^2)$ scaling of congestion persists after remetrization, let us start with a $\delta$-hyperbolic
graph and modify the edge metric according to the above scheme.  Does remetrization 
ensure another $\delta'$-hyperbolic graph?  We show below that this is not the case and thus
remetrization can significantly affect the congestion scaling in the graph, unless the
weights $w_e$ are bounded away from zero and infinity.

\begin{theorem}
For any $\delta$-hyperbolic graph, $\delta>0$, remetrization can change its scaling of congestion unless the
(remetrization) multipliers are uniformly bounded away from zero and infinity.
\end{theorem}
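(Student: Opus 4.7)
The plan is to exhibit a $\delta$-hyperbolic graph whose congestion scaling is strictly reduced by a remetrization whose multipliers are not bounded away from zero. The complementary direction---that congestion scaling is preserved when the multipliers satisfy a uniform bound $0<a\le w_{e}\le b<\infty$---is standard: the new metric is bi-Lipschitz equivalent to the old, so the remetrization is a quasi-isometry, which preserves $\delta$-hyperbolicity up to a different constant and, via the core results cited at the start of Section~4, preserves the $\Theta(N^{2})$ scaling. Only the unbounded case needs explicit work.

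For the unbounded case, I would consider the graph $G$ obtained from the rooted $k$-regular tree $T_{k}$ by adding, for every $p\ge 1$, a random $k$-regular graph on the sphere $S_{p}=\{x:d_{T_{k}}(x_{0},x)=p\}$; that is, the construction of Section~\ref{sec3}. Declare each tree edge to have length $1$ and each horizontal edge at depth $p$ to have length $W_{p}=2^{p+2}$. The first step is to verify that no shortest path in $G$ uses a horizontal edge: a single horizontal edge at depth $p$ costs $W_{p}=2^{p+2}$, whereas the tree ``up--over--down'' detour between its endpoints costs at most $2p<W_{p}$, and any path containing such an edge retains this excess over a tree alternative. Hence the shortest-path metric on $G$ equals the tree metric of $T_{k}$, which is $0$-hyperbolic; in particular $G$ is $\delta$-hyperbolic for every $\delta>0$.

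Next I would estimate congestion in both metrics. Under the original weighted metric, since the metric is a tree metric, every path between points lying in different subtrees of $x_{0}$ must pass through $x_{0}$, and a standard count gives $T_{n}(x_{0})=\Theta(N^{2})$, matching the core scaling. Now apply the remetrization $w_{e}=1/W_{p}$ on each horizontal edge at depth $p$ and $w_{e}=1$ on each tree edge. All edge lengths become $1$, so the remetrized graph is exactly the uniformly weighted $\widehat{G}$ of Section~\ref{sec3}. The Lemma there, combined with Bollob\'as' diameter bound, yields $T_{n}(v)\le N^{\log_{k-1}(2k-1)+o(1)}$ for every vertex $v$; taking $k$ large enough depending on $\epsilon$ makes this $O(N^{1+\epsilon})$.

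Thus the scaling of congestion collapses from $\Theta(N^{2})$ to $O(N^{1+\epsilon})$ while the multipliers $1/W_{p}\to 0$ as $p\to\infty$, so they are not bounded away from zero, exactly as the theorem asserts. The main technical point is the weight choice: $W_{p}$ must grow fast enough that the initial metric is literally the tree metric (so hyperbolicity is immediate with a $\delta$ independent of $n$), yet must be completely undone by the remetrization to recover the Section~\ref{sec3} configuration so that the $N^{1+\epsilon}$ bound becomes available. The exponential choice $W_{p}=2^{p+2}$ satisfies both requirements, and no deeper obstacle appears.
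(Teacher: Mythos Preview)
Your example is internally correct: with $W_{p}=2^{p+2}$ on the horizontal edges, every geodesic is a tree geodesic, so the weighted graph is $0$-hyperbolic with $\Theta(N^{2})$ congestion at the root, and after remetrizing to unit weights the Section~\ref{sec3} bound gives $O(N^{1+\epsilon})$ congestion everywhere. The bounded-multiplier direction via bi-Lipschitz equivalence and quasi-isometric invariance of hyperbolicity matches the paper exactly.

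The route, however, is genuinely different from the paper's, and in one respect falls short of it. The paper works with the hyperbolic tessellations $H_{p,q}$ of the Poincar\'e disk: starting from the unit-weight grid (which is strictly $\delta$-hyperbolic with $\delta>0$), it shrinks the edges along each sphere $S_{k}$ by factors $w_{k}\to 0$ so that the remetrized graph becomes quasi-isometric to $\mathbb{Z}^{2}$, with congestion dropping to $O(N^{3/2})$; it then invokes the Bonk--Schramm embedding to transfer this to an arbitrary $\delta$-hyperbolic graph. Your construction instead recycles the Section~\ref{sec3} graph, which buys a sharper drop (down to $N^{1+\epsilon}$) with almost no new work, but (i) supplies only a single instance rather than addressing the ``for any $\delta$-hyperbolic graph'' quantifier in the statement, and (ii) starts from a $0$-hyperbolic tree metric hidden inside a non-tree graph---precisely the degenerate situation the paper explicitly sets aside with its $p=\infty$ remark. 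If the theorem is read as a universal claim, point~(i) is a genuine gap; if it is read existentially, your argument is a valid and arguably cleaner alternative, modulo the caveat in~(ii).
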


\begin{proof}
\par We shall prove the result for regular hyperbolic grids embedded in $\mathbb{H}^n$ and then appeal to 
the quasi-isometry of all $\delta$-hyperbolic graphs with these reference graphs (see \cite{bonk}) to 
complete the proof.  To simplify exposition, we focus
on dimension 2 only, since the argument carries through similarly for higher dimensions.
Let $G = H_{p,q}$ be a regular tessellation of the Poinc\'are disk with $1/p+1/q<4$, $p,q<\infty$. $H_{p,q}$ may be viewed as a (hyperbolic) grid where each node has (the same) degree $q$ and each face has (the same) $p$ sides, Figure~2 
depicts $H_{3,7}$. Note that in the case that $p=\infty$ the graph $G$ is a $q$-regular tree and is thus $0$-hyperbolic regardless of any metrization. Let $x_0$ be the node at the center of the disk and let $S_{k}$ be the set of nodes in $G$ at distance $k$ from $x_0$. Note that the sub-graph induced by the set $S_{k}$ is a cycle $C_{N(k)}$ with $N(k)$ nodes. Let us denote this graph also by $S_{k}$. It is not difficult to see that there exists a sequence $\{w_{k}\}_{k\geq 1}$ such that $w_{k}\to 0$ exponentially fast so that if we remetrize every edge in $S_{k}$ by the constant $w_{k}$ then the induced graph is not hyperbolic since it will be quasi-isometric to the Euclidean grid $\Z^2$. It was observed in \cite{NS2} and then proved in \cite{Tucci1}, that the nodes in $\Z^{q}$ have congestion of the order $O(N^{1+1/q})$. Therefore, the nodes in the new graph have a congestion of the order $O(N^{3/2})$. 

We observe that the above construction used arbitrarily small weights. More precisely, given $\epsilon>0$ there are infinitely many weights in this construction such that $w_{e}<\epsilon$. It is not hard to see that the same construction is possible with arbitrarily large 
weights instead of small weights. However, if we restrict these weights so that there exist positive constants $k$ and $K$ such that 
\begin{equation}\label{eeqq}
k\leq w_{e}\leq K \quad\quad\text{for all $e\in E$},
\end{equation}
then the original and the remetrized graphs are indeed quasi-isometric. Therefore, by a result of Gromov,
see \cite{Gromov, bridson}, if one graph is hyperbolic so is the other and thus $O(N^2)$ is
unaffected by the said change of metric.
\end{proof}

\begin{figure}[!Ht]\label{Fig2}
  \begin{center}
    \includegraphics[width=2.0in]{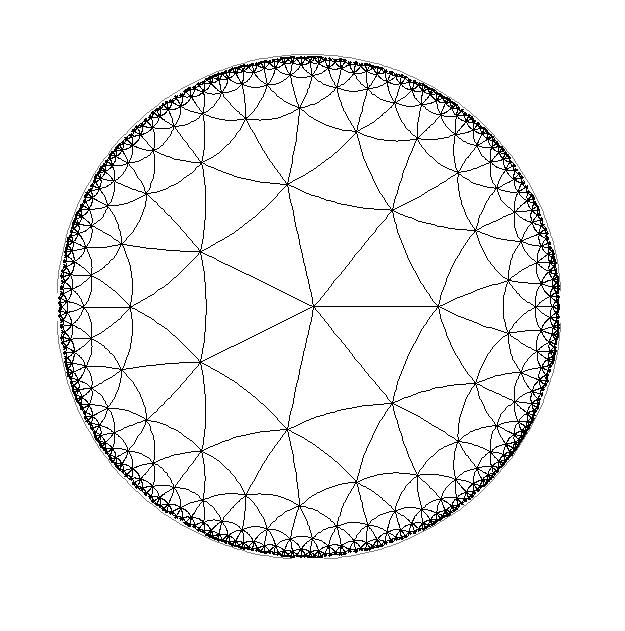}
    \caption{$H_{3,7}$ hyperbolic grid.}
  \end{center}
\end{figure}

\noindent {\it Acknowledgement}. This work was funded by NIST Grant No. 60NANB10D128.


\begin{thebibliography}{19}

\bibitem{NS2} O. Narayan, I. Saniee, {\it Large-scale curvature of networks}, http://arxiv.org/0907.1478 (2009), and Physical Review E (statistical physics), Vol. 84, No. 066108, Dec. 2011.

\bibitem{NS1} O. Narayan, I. Saniee, {\it Scaling of load in communication networks}, Physical Review E (statistical physics), Vol. 82, No. 036102, Sep. 2010. 

\bibitem{Tucci1} Y. Baryshnikov and G. Tucci, {\it Asymptotic traffic flow in an Hyperbolic Network I : Definition and Properties of the Core}, preprint at http://arxiv.org/abs/1010.3304. 

\bibitem{Tucci2} Y. Baryshnikov and G. Tucci, {\it Asymptotic traffic flow in an Hyperbolic Network II: Non-uniform Traffic}, preprint at http://arxiv.org/abs/1010.3305. 

\bibitem{H1} E. Jonckheere, P. Lohsoonthorn and F. Bonahon, {\it Scaled Gromov hyperbolic graphs}, Journal of Graph Theory, vol. 57, pp. 157-180, 2008.

\bibitem{H0} E. Jonckheere, M. Lou, F. Bonahon and Y. Baryshnikov, {\it Euclidean versus hyperbolic congestion in idealized versus experimental networks}, http://arxiv.org/abs/0911.2538.


\bibitem{H3} P. Lohsoonthorn, {\it Hyperbolic Geometry of Networks}, Ph.D. Thesis, Department of Electrical Engineering, University of Southern California, 2003. Available at
http://eudoxus.usc.edu/iw/mattfinalthesis main.pdf.

\bibitem{H4} M. Lou, {\it Traffic pattern analysis in negatively curved networks}, PhD thesis University of Southern California, May 2008. Available at http://eudoxus.usc.edu/iw/Mingji-PhD-Thesis.pdf.

\bibitem{BVega} B. Bollobas and W. Fernandez de la Vega, {\it The diameter of random regular graphs}, Combinatorica 2, vol. 2, pp. 125--134, 1982.

\bibitem{jonck} E. A. Jonckheere, P. Lohsoonthorn, {\it Geometry of network security}, Proc. of the American Control Conf., (2004).

\bibitem{Gromov} M. Gromov, {\it Hyperbolic Groups}, Essays in group theory, Springer, New York, pp. 75-263, 1987.

\bibitem{bridson} M. Brisdon, A. Haefliger, {\it Metric spaces of
non-positive curvature}, Springer Verlag, vol. 319, 1991.

\bibitem{hyde} C.C. Heyde, E. Seneta, 
{\it I. J. Beinaym\'e, Statistical Theory Anticipated},
Studies in the History of Mathematics and Physical Sciences, No. 3,
Springer-Verlag, New York, 1977.

\bibitem{Newman1} M. Newman, {\it Networks. An introduction}, Oxford University Press, Oxford, 2010.

\bibitem{Newman2} M. Newman, {\it The structure and function of complex networks}, SIAM Rev. 45, no. 2, pp. 167-256, 2003.

\bibitem{lyons} R. Lyons and Y. Peres, {\it Probability on Trees
and Networks}, Course Notes, Fall 2004 (forthcoming book),
http://php.indiana.edu/~rdlyons/prbtree/prbtree.html, 2004.

\bibitem{Mohar2} B. Mohar, {\it The Laplacian Spectrum of Graphs},
Graph Theory, Combinatorics, and Appl., vol. 2, pp. 871-898, 1991.

\bibitem{Mohar} B. Mohar, W. Woess, {\it A Survey on Spectra of
Infinite Graphs}, Bull. London Math. Soc., vol. 21, pp. 209-234,
1989.

\bibitem{karrer-newman} B. Karrer and M.E. Newman, {\it Random
graphs containing arbitrary distribution of sub-graphs},
http://arxiv.org/1005.1659.

\bibitem{bonk} M. Bonk, O. Schramm, 
{\it Embeddings of Gromov hyperbolic spaces}, 
Geom. Funct. Analysis, vol. 10, pp. 266-306, 2000.



\end{thebibliography}
\end{document}